\numberwithin{equation}{section}
\theoremstyle{plain}
\newtheorem{theorem}{\bf Theorem}
\newtheorem{lemma}[theorem]{\bf Lemma}
\newtheorem{conjecture}[theorem]{Conjecture}
\newtheorem{proposition}[theorem]{Proposition}
\theoremstyle{definition}
\newtheorem{note}[theorem]{Note}
\newtheorem{case[theorem]}{Case}
\theoremstyle{remark}
\newtheorem{remark}[theorem]{Remark}
\numberwithin{equation}{section}
\begin{document}

\title{Pairs of dot products in finite fields and rings}
\author{David Covert and Steven Senger}

\subitem \email{covertdj@umsl.edu, stevensenger@missouristate.edu}

\thanks{}

\begin{abstract} We obtain bounds on the number of triples that determine a given pair of dot products arising in a vector space over a finite field or a module over the set of integers modulo a power of a prime.  More precisely, given $E\subset \mathbb F_q^d$ or $\mathbb Z_q^d$, we provide bounds on the size of the set
\[\left\{(u,v,w)\in E \times E \times E : u\cdot v = \alpha, u \cdot w = \beta   \right\}\]
for units $\alpha$ and $\beta$.
\end{abstract}

\maketitle

\section{Introduction}

For a subset of a ring, $A \subset R$, the sumset and productset of $A$ are defined as $A +A =\{a + a' : a, a' \in A\}$ and $A \cdot A = \{a \cdot a' : a, a' \in A\}$, respectively.  The sum-product conjecture asserts that when $A \subset \mathbb{Z}$, then either $A+A$ or $A \cdot A$ is of large cardinality.  For example if we take $A \subset \mathbb{Z}$ to be a finite arithmetic progression of length $n$, you achieve $|A+A| = 2n - 1$, whereas $|A \cdot A| \geq c n^2/((\log n)^{\delta} \cdot (\log \log n)^{3/2})$ for some constant $c > 0$ and $\delta = 0.08607\dots$ (\cite{Ford}). When $A \subset \mathbb{Z}$ is a geometric progression of length $n$, we have $|A\cdot A| = 2n - 1$, and yet it is easy to show that $|A + A| = {n+1 \choose 2}$.  For subsets of integers the following conjecture was made in \cite{ES83}.
\begin{conjecture}
Let $A \subset \mathbb{Z}$ with $|A| = n$.  For every $\epsilon > 0$, there exists a constant $C_{\epsilon}>0$ so that
\[
\max(|A+A|,|A\cdot A|) \geq C_{\epsilon} n^{2-\epsilon}.
\]
\end{conjecture}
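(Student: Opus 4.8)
The plan is to attack the stated exponent $2-\epsilon$ directly, by contradiction, playing the additive and the multiplicative energy of $A$ against each other. Suppose $|A+A| < n^{2-\epsilon}$ and $|A\cdot A| < n^{2-\epsilon}$ simultaneously. Writing $r_+(x)=\#\{(a,b)\in A^2 : a+b=x\}$ and $r_\times(y)=\#\{(a,b)\in A^2 : ab=y\}$ (discarding $0$ from $A$, which costs only one element), the Cauchy--Schwarz inequality gives
\[
E_+(A)=\sum_x r_+(x)^2 \ge \frac{|A|^4}{|A+A|} > n^{2+\epsilon}, \qquad E_\times(A)=\sum_y r_\times(y)^2 \ge \frac{|A|^4}{|A\cdot A|} > n^{2+\epsilon}.
\]
Thus the assumption forces $A$ to be simultaneously rich in additive and in multiplicative quadruples, and the entire strategy is to show that no large set can sustain both.

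First I would convert each energy bound into structure. On the additive side, the Balog--Szemer\'edi--Gowers theorem yields a subset $A'\subset A$ of small doubling, after which Freiman's theorem confines $A'$ to a generalized arithmetic progression of bounded rank; passing to logarithms (handling signs separately) and repeating gives a subset $A''\subset A$ confined to a generalized geometric progression. Intersecting $A'$ and $A''$ would produce a single large $B\subset A$ that is at once nearly arithmetic and nearly geometric, and the final step would be a rigidity statement: the common refinement of a generalized arithmetic and a generalized geometric progression must be small, contradicting the largeness of $B$. Heuristically this is the assertion that additive and multiplicative structure are mutually exclusive, which is exactly the phenomenon the conjecture encodes.

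The hard part --- and the reason the statement remains open --- is quantitative, and it breaks the plan in two places. The near-extremal regime we are forced into has doubling constant as large as $K\approx n^{1-\epsilon}$, and for such $K$ the Balog--Szemer\'edi--Gowers and Freiman steps lose polynomial powers of $K$, so the surviving $B$ may shrink to size $n^{O(\epsilon)}$ and carry no contradiction at all; making these steps essentially lossless for $K$ this large would itself be a major advance. Even granting clean structure, one still needs a \emph{sharp} incompatibility theorem bounding the overlap of a generalized arithmetic and a generalized geometric progression, and no such bound is known in the required strength. The incidence-based approaches (Elekes' line construction together with the Szemer\'edi--Trotter theorem, or Solymosi's multiplicative-energy refinement) stall near the exponent $4/3+c$ precisely because Szemer\'edi--Trotter is tight on grids, so they cannot be pushed to $2-\epsilon$ by bookkeeping alone. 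Reaching the full exponent appears to demand a genuinely new rigidity principle for the arithmetic/geometric clash, or a higher-dimensional incidence bound that evades the planar barrier; identifying that input is where I expect the real difficulty to lie.
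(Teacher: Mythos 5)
There is no proof to compare against: the statement you were given is the Erd\H{o}s--Szemer\'edi sum--product conjecture, which the paper states only as a conjecture (attributed to \cite{ES83}) and never proves --- it is a famous open problem, with the best known exponent being $4/3+c$ for $c<\tfrac{1}{20598}$ \cite{KonShk}, far short of $2-\epsilon$. Your proposal, to your credit, is candid about this: what you have written is a research program, not a proof, and you correctly identify where it collapses. To make the failure concrete: after the Cauchy--Schwarz step your hypothesis only yields $E_+(A), E_\times(A) > n^{2+\epsilon}$, which corresponds to a doubling-type parameter $K$ as large as $n^{1-\epsilon}$. In that regime the Balog--Szemer\'edi--Gowers theorem returns a subset of size $\gtrsim |A|/K^{O(1)}$ with doubling $K^{O(1)}$, and for $K$ polynomial in $n$ this conclusion is satisfied vacuously by essentially any set, so no structural information survives, and Freiman's theorem then confines you to a progression of rank and size growing with $K$ in a way that carries no contradiction. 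Moreover, the ``rigidity'' lemma your final step requires --- a sharp bound on the overlap of a generalized arithmetic progression with a generalized geometric progression --- is not known in anything close to the required strength (results of Chang type apply only when the rank and doubling are small). So the argument has a genuine, and in fact well-known, gap at both of the places you flag: the structure-extraction steps are too lossy at the relevant energy scale, and the incompatibility theorem that would finish the proof does not exist in the literature.

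One further caution: even the qualitative shape of your contradiction --- ``no set can have both energies large'' --- is not obstructed by any known theorem at these exponents. Energy decomposition results in the spirit of Balog--Wooley show that every finite set splits into a piece of small additive energy and a piece of small multiplicative energy, but with exponents like $E_+ , E_\times \lesssim n^{3-\delta}$ for small $\delta$, which is compatible with both energies exceeding $n^{2+\epsilon}$. Since the paper under review uses the conjecture only as motivation for its finite-field and finite-ring results (Theorems \ref{general}, \ref{large}, and \ref{LargeZqd}), the correct disposition of your write-up is as a discussion of why the conjecture is hard, not as a proof attempt that can be repaired by sharpening constants.
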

Much progress been made on the sum-product problem.  The best result to date belongs to Konyagin and Shkredov (\cite{KonShk}), wherein they demonstrated that for a sufficiently large constant $C$, we have the bound
\[
\max(|A+A|,|A\cdot A|) \geq Cn^{4/3 + c}
\]
for any $c < \frac{1}{20598}$, whenever $A$ is a set of real numbers with cardinality $n$.  Work has also been done on analogues of the sum-product problem for general rings (\cite{TaoRing}).  For example, the authors in \cite{HIKR} showed that if $E \subset \mathbb{F}_q^d$ is of sufficiently large cardinality, then we have 
\[
|\{(x,y) \in E \times E : x\cdot y = \alpha\}| = \frac{|E|^2}{q}(1 + \underline{o}(1)),
\]
for any $\alpha \in \mathbb{F}_q^*$.  Here, $\mathbb{F}_q$ is the finite field with $q$ elements, $\mathbb{F}_q^d$ is the $d$-dimensional vector space over $\mathbb{F}_q$, and $\mathbb{F}_q^* = \mathbb{F}_q \setminus\{0\}$.  As a corollary they showed that $|dA^2| := |A \cdot A + \dots + A \cdot A| \supset \mathbb{F}_q^*$, whenever $A \subset \mathbb{F}_q$ is such that $|A| \geq q^{\frac{1}{2} + \frac{1}{2d}}$.  Much work has also been done to give such results when $E$ has relatively small cardinality.  See, for example, \cite{KS} and the references contained therein.

In \cite{BS}, the second listed author and Daniel Barker studied pairs of dot products determined by sets $P \subset \mathbb{R}^2$. In addition to the applications toward the sum-product problem above, the problem of pairs of dot products has applications in coding theory, graph theory, and frame theory, among others (\cite{AB,Bahls,Fickus}).  The main results from \cite{BS} are as follows.
\begin{theorem}\label{steve}
Suppose that $P \subset \mathbb{R}^2$ is a finite point set with cardinality $|P| = n$.  Then, the set
\[
\Pi_{\alpha, \beta}(P) : = \{(x,y,z) \in P \times P \times P : x \cdot y = \alpha, x \cdot z = \beta\}
\]
satisfies the upper bound $|\Pi_{\alpha, \beta}(P)| \lesssim n^2$ whenever $\alpha$ and $\beta$ are fixed, nonzero real numbers.
\end{theorem}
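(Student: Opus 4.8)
The plan is to fix the apex $x$ of the configuration and reduce the count to a linear-algebraic incidence problem. Writing
\[
|\Pi_{\alpha,\beta}(P)| = \sum_{(y,z)\in P\times P} N(y,z), \qquad N(y,z):=|\{x\in P : x\cdot y=\alpha,\ x\cdot z=\beta\}|,
\]
I would first observe that since $\alpha,\beta\neq 0$, any admissible $x,y,z$ must be nonzero; in particular each of $y$ and $z$ is a nonzero normal vector determining a genuine affine line in $\mathbb{R}^2$. This is what lets me treat the two conditions as two linear equations in the single unknown $x$.

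Then I would split the sum according to whether the pair $(y,z)$ is linearly independent or dependent. For linearly independent $(y,z)$, the system $x\cdot y=\alpha$, $x\cdot z=\beta$ is a pair of independent linear equations in the unknown $x\in\mathbb{R}^2$, so it has a unique solution; hence $N(y,z)\le 1$ and this part of the sum is at most $|P\times P| = n^2$.

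The degenerate case is the one to watch, and it is really the only subtlety. If $z=\lambda y$ with $\lambda\neq 0$, then $x\cdot z=\lambda(x\cdot y)$, so the two lines $\{x: x\cdot y=\alpha\}$ and $\{x: x\cdot z=\beta\}$ are parallel; they coincide exactly when $\lambda=\beta/\alpha$ and are disjoint otherwise. Thus $N(y,z)=0$ unless $z=(\beta/\alpha)y$, and for each $y\in P$ there is at most one such $z\in P$. Since trivially $N(y,z)\le |P|=n$, the dependent part of the sum is at most $n\cdot n=n^2$. Adding the two contributions gives $|\Pi_{\alpha,\beta}(P)|\le 2n^2\lesssim n^2$, as desired.

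I expect the main obstacle to be exactly this parallel/coincident-line case: one must check that the single bad ratio $\lambda=\beta/\alpha$ cannot force a superquadratic blow-up, which here it cannot, because at most $n$ pairs survive and each contributes at most $n$. An alternative route that avoids the case split is to fix $x$ instead, set $a(x)=|P\cap\{w:x\cdot w=\alpha\}|$ and $b(x)=|P\cap\{w:x\cdot w=\beta\}|$, and apply Cauchy--Schwarz to $\sum_x a(x)b(x)$; bounding $\sum_x a(x)^2$ then amounts to counting triples $(x,y,y')$ with $x\cdot y=x\cdot y'=\alpha$, which by the same unique-intersection principle is $O(n^2)$.
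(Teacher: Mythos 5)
Your proof is correct, and it follows essentially the same route as the paper's argument (given there for the finite-field analogue, Theorem \ref{general}, modeled on the cited proof of this theorem): fix the pair $(y,z)$, and split according to whether the two lines $\{x:x\cdot y=\alpha\}$ and $\{x:x\cdot z=\beta\}$ meet in at most one point or coincide, bounding each part by $n^2$. Your handling of the degenerate case (for each $y$ at most one $z=(\beta/\alpha)y$, each contributing trivially at most $n$) is just a streamlined version of the paper's bookkeeping via direction equivalence classes, which the paper needs in that generality mainly because distinct lines over $\mathbb{Z}_q$ can meet in more than one point.
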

\begin{note}
Here and throughout, we use the notation $X \lesssim Y$ to mean that $X \leq cY$ for some constant $c > 0$. Similarly, we use $X \gtrsim Y$ for $Y \lesssim X$, and we use $X \approx Y$ if both $X \lesssim Y$ and $X \gtrsim Y$. Finally, we write $X \gtrapprox Y$ if for all $\epsilon > 0$, there exists a constant $C_{\epsilon} > 0$ such that $X \gtrapprox C_{\epsilon} q^{\epsilon} Y$.
\end{note}

Theorem \ref{steve} is sharp, as shown in an explicit construction (\cite{BS}). Additionally, they showed the following.

\begin{theorem}\label{sep}
Suppose that $P \subset [0,1]^2$ is a set of $n$ points that obeys the separation condition
\[
\min(|p-q| : p, q \in P, p \neq q) \geq \epsilon.
\]
Then, for $\epsilon > 0$ and fixed $\alpha, \beta \neq 0$, we have
\[
|\Pi_{\alpha, \beta}(P)| \lesssim n^{4/3}\epsilon^{-1}\log\left(\epsilon^{-1}\right).
\]
\end{theorem}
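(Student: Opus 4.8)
The plan is to fix the first coordinate of the triple and decouple the two dot-product conditions. For a fixed $x \in P$ with $x \neq 0$, the equation $x \cdot y = \alpha$ forces $y$ onto the line $\ell_{x,\alpha} := \{y \in \mathbb{R}^2 : x \cdot y = \alpha\}$, and similarly $z$ must lie on $\ell_{x,\beta}$. Since the constraints on $y$ and $z$ are independent once $x$ is fixed, writing $a_x := |P \cap \ell_{x,\alpha}|$ and $b_x := |P \cap \ell_{x,\beta}|$ gives
\[
|\Pi_{\alpha,\beta}(P)| = \sum_{x \in P} a_x\, b_x,
\]
where points $x = 0$, if present, contribute nothing because $\alpha, \beta \neq 0$.

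Next I would exploit the separation hypothesis to cap the number of points on any single line. Every line meets the unit square $[0,1]^2$ in a segment of length at most $\sqrt 2$, and the points of $P$ lying on such a segment are pairwise at distance at least $\epsilon$; a one-dimensional packing argument then yields at most $\sqrt 2\,\epsilon^{-1} + 1 \lesssim \epsilon^{-1}$ of them. In particular $b_x \lesssim \epsilon^{-1}$ for every $x$. This is the only place the separation condition enters, and it is the source of the factor $\epsilon^{-1}$.

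The remaining input is the Szemer\'edi--Trotter incidence bound. For fixed $\alpha \neq 0$ the assignment $x \mapsto \ell_{x,\alpha}$ is injective, since the line determines its unit normal and its signed distance to the origin, which in turn pin down $x$ once $\alpha$ is fixed; hence $\mathcal{L}_\alpha := \{\ell_{x,\alpha} : x \in P\}$ is a family of exactly $n$ distinct lines. Applying Szemer\'edi--Trotter to the $n$ points of $P$ against these $n$ lines gives
\[
\sum_{x \in P} a_x = \sum_{\ell \in \mathcal{L}_\alpha} |P \cap \ell| \lesssim n^{4/3}.
\]
Combining this with the uniform bound on $b_x$ produces $|\Pi_{\alpha,\beta}(P)| \le \bigl(\max_x b_x\bigr)\sum_x a_x \lesssim \epsilon^{-1} n^{4/3}$.

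The logarithmic factor in the statement appears once one routes the last step through a dyadic decomposition rather than the crude bound $b_x \lesssim \epsilon^{-1}$: partitioning $P$ according to the richness $a_x \sim 2^k$, the cap $a_x \lesssim \epsilon^{-1}$ confines $k$ to $O(\log \epsilon^{-1})$ scales, and summing the incidence estimate over these scales contributes the factor $\log(\epsilon^{-1})$. I expect the main obstacle to be bookkeeping rather than any deep difficulty: the genuinely delicate points are verifying the distinctness of the lines in $\mathcal{L}_\alpha$, so that Szemer\'edi--Trotter is applied with the honest count of $n$ lines, and confirming that the separation condition really does restrict each line to $O(\epsilon^{-1})$ points of $P$. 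Everything else reduces to an invocation of a known incidence theorem.
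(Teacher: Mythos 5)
This theorem is background quoted from \cite{BS}; the present paper does not reprove it, so there is no in-paper argument to compare against, and your proposal has to stand on its own. It does: the decomposition $|\Pi_{\alpha,\beta}(P)| = \sum_{x\in P} a_x b_x$ is exact (independence of the conditions on $y$ and $z$ once $x$ is fixed, and $x=0$ contributes nothing since $\alpha,\beta\neq 0$), the packing bound $b_x \lesssim \epsilon^{-1}$ on a chord of $[0,1]^2$ is the correct use of the separation hypothesis, and the injectivity of $x \mapsto \ell_{x,\alpha}$ for fixed $\alpha\neq 0$ is right (two distinct points of a line avoiding the origin are linearly independent, so they determine $x$), which legitimizes applying Szemer\'edi--Trotter to $n$ points and $n$ distinct lines to get $\sum_x a_x \lesssim n^{4/3}$. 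Combining these gives $|\Pi_{\alpha,\beta}(P)| \lesssim n^{4/3}\epsilon^{-1}$, which is in fact slightly stronger than the stated bound; the source's argument, which produces the extra $\log(\epsilon^{-1})$, proceeds by dyadically decomposing according to line richness (capped at $\epsilon^{-1}$ by separation) and summing a rich-lines estimate over the $O(\log \epsilon^{-1})$ scales, whereas your crude cap on $b_x$ bypasses that summation entirely. Your closing paragraph about reinstating the log via a dyadic decomposition is therefore unnecessary (and a bit backwards: the dyadic route is the lossier one here); you can simply state the stronger $n^{4/3}\epsilon^{-1}$ bound, which implies the theorem for $\epsilon$ bounded away from $1$, the only regime in which the statement has content since otherwise $n=O(1)$.
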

The purpose of this article is to study finite field and finite ring analogues of the results from \cite{BS}.  Our main results are as follows.
\begin{theorem} \label{general}
Given a set, $E \subset \mathbb F_q^2$ or $\mathbb Z_q^d, |E| = n$, and fixed units $\alpha , \beta$, we have the bound
$$|\Pi_{\alpha,\beta}(E)|\lesssim n^2.$$
\end{theorem}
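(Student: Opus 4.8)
The plan is to count by fixing the pair $(v,w)$ and bounding the number of admissible $u$. Writing
\[
|\Pi_{\alpha,\beta}(E)| = \sum_{(v,w)\in E\times E} M(v,w), \qquad M(v,w) := \#\{u\in E : u\cdot v = \alpha,\ u\cdot w = \beta\},
\]
the inner count is the number of solutions $u\in E$ of the linear system whose coefficient matrix has rows $v$ and $w$ and whose right-hand side is $(\alpha,\beta)$. I would organize the sum according to the determinant $D := v_1 w_2 - v_2 w_1$. First note that since $\alpha$ is a unit, any $v$ admitting a solution of $u\cdot v=\alpha$ must have a unit coordinate (in the field case, simply be nonzero), and likewise for $w$; vectors in $p\mathbb Z_q^2$ contribute nothing. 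When $D$ is a unit the matrix is invertible over the ring, so the system has a \emph{unique} solution $u$; hence $M(v,w)\le 1$, and these non-degenerate pairs contribute at most $|E|^2 = n^2$. This already disposes of the bulk of the count.

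The degenerate pairs ($D$ a non-unit) are handled completely over $\mathbb F_q^2$. There $D=0$ means $v$ and $w$ are linearly dependent, say $w=\lambda v$ with both nonzero; feeding this into the two equations forces $\lambda\alpha=\beta$, so $\lambda=\beta\alpha^{-1}$ is determined and $w=\beta\alpha^{-1}v$ is a function of $v$ alone. Consequently each degenerate triple $(u,v,w)$ is determined by the pair $(u,v)$ subject to $u\cdot v=\alpha$, and there are at most $n^2$ such pairs. Combining the two cases yields $|\Pi_{\alpha,\beta}(E)|\le 2n^2$ over $\mathbb F_q^2$.

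Over $\mathbb Z_q^2$ with $q=p^r$ the difficulty is genuine: $D$ can be a nonzero zero-divisor, so $\{D\text{ a non-unit}\}$ is strictly larger than $\{v,w\text{ dependent}\}$ and the clean ``$w$ is determined by $v$'' collapse fails. My plan is to recover as much of it as possible by reducing modulo $p$. The system is consistent only if its reduction in $\mathbb F_p^2$ is, and the field computation above then forces $w\equiv \beta\alpha^{-1} v \pmod p$, pinning the residue of $w$. I would then stratify the degenerate pairs by $k=\nu_p(D)\in\{1,\dots,r\}$, using that a consistent system with $\nu_p(D)=k$ and $v,w$ primitive has $p^{k}$ solutions $u$ in the ambient module (its solution set is a coset of the kernel, of size $p^{k}$), while for a fixed primitive $v$ the vectors $w$ with $\nu_p(D)=k$ have relative density $\approx p^{-k}$, cut down further by the consistency congruence modulo $p^{k}$.

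The main obstacle is precisely the bookkeeping in this last step. Bounding each stratum by the crude product (number of admissible $u$)$\times$(number of admissible $w$) loses a factor of $\log q$ against the target, so obtaining the clean $n^2$ bound requires exploiting the balance between the $p^{k}$ blow-up in the number of solutions $u$ and the simultaneous $p^{-k}$-sparsity and mod-$p^{k}$ consistency restriction on the pairs, all while respecting the constraint $u\in E$. I expect this to be settled either by an induction on the exponent $r$ that passes from a configuration over $\mathbb Z_{p^r}^2$ to one over $\mathbb Z_{p^{r-1}}^2$ after peeling off one factor of $p$, or by an orthogonality (character-sum) argument over $\mathbb Z_q$, whose diagonal term contributes the acceptable main term $n^3/q^2\le n^2$ (using $n\le q^2$) and whose off-diagonal terms must be shown to be $\lesssim n^2$.
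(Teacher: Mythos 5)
Your argument over $\mathbb{F}_q^2$ is complete and correct, and it is essentially the paper's proof in linear-algebraic clothing: your unit-determinant pairs are the paper's set $A$ (pairs $(v,w)$ for which $L_\alpha(v)\cap L_\beta(w)$ has at most one point, hence at most one candidate $u$), and your observation that a degenerate consistent pair forces $w=\beta\alpha^{-1}v$, so that the triple is recoverable from $(u,v)$ alone, replaces the paper's bookkeeping with an equivalence relation on line directions. If anything, your version of the field case is cleaner.

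The genuine gap is the $\mathbb{Z}_q$ case, and you have correctly located where it lives: pairs whose determinant $D$ is a nonzero zero-divisor, $1\le \nu_p(D)\le r-1$. Every structural observation you make about this stratum is true (consistency forces $w\equiv\beta\alpha^{-1}v \pmod p$ and a congruence mod $p^k$; the solution set for $u$ is a coset of size $p^k$; the admissible $w$ for fixed primitive $v$ lie in a coset of index roughly $p^{2k}$), but you never convert them into a count: the crude product of (solutions $u$ in the ambient module) and (admissible $w$ in the ambient module) gives about $p^k\cdot q^2p^{-2k}$ per $v$ and per stratum, which after summing is of order $nq^2/p$ rather than $n^2$, so one must exploit membership in $E$ on the $u$ side and the $w$ side simultaneously --- exactly the step you defer. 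Neither proposed rescue is carried out, and the orthogonality route provably cannot close the gap by itself: it is the content of Theorem \ref{LargeZqd} and controls the off-diagonal terms only when $|E|\gtrapprox q^{3/2}$ in two dimensions. So as written you have proved Theorem \ref{general} over $\mathbb{F}_q^2$ but not over $\mathbb{Z}_q^2$. For what it is worth, your instinct that this case is delicate is vindicated by the paper itself: its proof of the ring case passes through the assertion that $|L_\alpha(v)\cap L_\beta(w)|>1$ forces $v$ and $w$ onto the same line through the origin, and that assertion is exactly what zero-divisors threaten (for instance $v=(1,0)$, $w=(1,p^{r-1})$, $\alpha=\beta=1$ gives $|L_\alpha(v)\cap L_\beta(w)|=p^{r-1}>1$ with $v$ and $w$ on different lines through the origin), so the stratified analysis you sketch genuinely needs to be completed rather than gestured at.
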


In general, for a set of $n$ points, $E\subset \mathbb F_q^2$, one cannot expect to get an upper bound better than Theorem \ref{general}, as we will show via an explicit construction in Proposition \ref{sharp}. This proof and construction are similar to their analogues in \cite{BS}.  However, if we view the separation condition from Theorem \ref{sep} as it relates to density (as is often the case for translating such results, such as in \cite{IS}), the previous proof techniques yield very little. It turns out that a discrepancy theoretic approach gives more information, as our second main result is for general subsets of $\mathbb{F}_q^d$, for $d\geq 2$, as opposed to just $d=2$.

\begin{theorem}\label{large}
Let $d\geq 2$, $E \subset \mathbb{F}_q^d$, and suppose that $\alpha, \beta \in \mathbb{F}_q.$  Then we have the bound
\[
|\Pi_{\alpha, \beta}(E)| = \frac{|E|^3}{q^2}(1 + \underline{o}(1)),
\]
for $|E| \gtrapprox q^{\frac{d+1}{2}}$ when $\alpha, \beta \in \mathbb{F}_q^*$, and for $|E| \gtrapprox q^{\frac{d+2}{2}}$ otherwise.
\end{theorem}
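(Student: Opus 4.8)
The plan is to expand the two dot-product constraints with a nontrivial additive character $\chi$ of $\mathbb{F}_q$, isolate the expected main term $|E|^3/q^2$, and show every remaining contribution is lower order under the stated hypotheses. Writing $\nu_\gamma(u) = |\{v \in E : u\cdot v = \gamma\}|$ and $\widehat{E}(m) = \sum_{v\in E}\chi(m\cdot v)$, orthogonality gives $\nu_\gamma(u) = |E|/q + R_\gamma(u)$ with $R_\gamma(u) = q^{-1}\sum_{s\neq 0}\chi(-s\gamma)\widehat{E}(su)$. Since $|\Pi_{\alpha,\beta}(E)| = \sum_{u\in E}\nu_\alpha(u)\nu_\beta(u)$, multiplying out yields the main term $|E|^3/q^2$, two cross terms $\frac{|E|}{q}\sum_{u\in E}R_\gamma(u)$ for $\gamma\in\{\alpha,\beta\}$, and the diagonal error $\sum_{u\in E}R_\alpha(u)R_\beta(u)$.

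The heart of the matter is a single estimate for $\sum_{u\in E}R_\gamma(u)^2 \le \sum_{u\in\mathbb{F}_q^d}R_\gamma(u)^2$, which I would evaluate exactly. Squaring $R_\gamma$ and using $\widehat{E}(su)\widehat{E}(s'u) = \sum_{v,w\in E}\chi(u\cdot(sv+s'w))$, the sum over all $u\in\mathbb{F}_q^d$ collapses to $q^d\mathbf{1}[sv+s'w=0]$; writing $\lambda(r) = |\{w\in E : rw\in E\}|$ and setting $r=-s'/s$, the whole expression reduces to
\[
\sum_{u\in\mathbb{F}_q^d}R_\gamma(u)^2 = q^{d-2}\sum_{r\neq 0}\lambda(r)\sum_{s\neq 0}\chi\big(-s(1-r)\gamma\big).
\]
When $\gamma\neq 0$ the inner character sum equals $q-1$ at the resonance $r=1$ and $-1$ otherwise, so the off-diagonal dilations cancel and, since $\lambda(1)=|E|$ and every $\lambda(r)\ge 0$,
\[
\sum_{u\in\mathbb{F}_q^d}R_\gamma(u)^2 = q^{d-2}\Big(q|E| - \sum_{r\neq 0}\lambda(r)\Big) \le q^{d-1}|E|.
\]
For $\gamma = 0$ the phase is trivial and no cancellation occurs, so there I fall back on the crude bound $\sum_{u\in E}R_\gamma(u)^2 \le \tfrac{(q-1)^2}{q^2}q^d|E| \lesssim q^d|E|$, which follows from Cauchy--Schwarz in $s$ together with Plancherel, $\sum_{m}|\widehat{E}(m)|^2 = q^d|E|$.

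With these bounds in hand, Cauchy--Schwarz controls everything. The diagonal error obeys $\big|\sum_{u\in E}R_\alpha R_\beta\big| \le (\sum_u R_\alpha^2)^{1/2}(\sum_u R_\beta^2)^{1/2}$, which is $\lesssim q^{d-1}|E|$ when both $\alpha,\beta\neq 0$ and $\lesssim q^d|E|$ otherwise, while each cross term is handled by $\big|\sum_{u\in E}R_\gamma(u)\big| \le |E|^{1/2}(\sum_u R_\gamma^2)^{1/2}$. Comparing these against the main term $|E|^3/q^2$, one checks that all error contributions are $\underline{o}(|E|^3/q^2)$ exactly when $|E|\gtrapprox q^{(d+1)/2}$ in the unit case and when $|E|\gtrapprox q^{(d+2)/2}$ otherwise, which gives the two thresholds in the statement.

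The main obstacle is the key estimate: a bound that discards the phases $\chi(-s\gamma)$ gives only $\sum_u R_\gamma^2 \lesssim q^d|E|$ and hence the weaker exponent $(d+2)/2$ in all cases. Extracting the improved exponent $(d+1)/2$ for units requires carrying out the orthogonality computation exactly and observing that, for $\gamma\neq 0$, the dilation variable resonates only at $r=1$, so that the potentially large collinear-pair count $\sum_{r\neq 0}\lambda(r)$ enters with a favorable sign and may simply be dropped. Confirming that the cross terms are likewise lower order than the main term is then routine given the same $R_\gamma^2$ bound.
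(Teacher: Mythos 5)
Your argument is correct and follows essentially the same route as the paper: orthogonality produces the main term $|E|^3/q^2$ plus cross terms and a diagonal term (the paper's $I$, $II$, $III$), the sum over $u\in E$ is dominated by the sum over all of $\mathbb{F}_q^d$, and Cauchy--Schwarz reduces everything to the $L^2$ bound $\sum_{u}R_\gamma(u)^2\leq q^{d-1}|E|$ for units, which is precisely the content of the Hart--Iosevich--Koh--Rudnev lemma the paper cites. The only difference is cosmetic: you organize the count via $\nu_\gamma(u)$ and re-derive that key lemma inline (the dilation/resonance computation at $r=1$) rather than quoting \cite{HIKR}, and your threshold bookkeeping in both the unit and non-unit cases matches the paper's.
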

Note that Theorem \ref{large} gives a quantitative version of Theorem \ref{general} at least for sets $E \subset \mathbb{F}_q^2$ in the range $|E| \gtrapprox q^{3/2}$.  

The proof of Theorem \ref{large} relies on adapting the exponential sums found in the study of single dot products (\cite{HIKR}).  Since the results from \cite{HIKR} were extended to general rings $\mathbb{Z}_q^d$ in \cite{CIP}, Theorem \ref{large} also easily extends to rings.  Here and throughout, $\mathbb{Z}_q$ denotes the set of integers modulo $q$, $\mathbb{Z}_q^{\times}$ is the set of units in $\mathbb{Z}_q$, and $\mathbb{Z}_q^d = \mathbb{Z}_q \times \dots \times \mathbb{Z}_q$ is the $d$-rank free module over $\mathbb{Z}_q$.  For $E \subset \mathbb{Z}_q^d$, we define $\Pi_{\alpha, \beta}(E)$ exactly as before.
\begin{theorem}\label{LargeZqd}
Suppose that $E \subset \mathbb{Z}_q^d$, where $q = p^{\ell}$ is the power of a prime $p \geq 3$.  Then for units $\alpha, \beta \in \mathbb{Z}_q^{\times}$, we have
\[
|\Pi_{\alpha, \beta}(E)| = \frac{|E|^3}{q^2}(1 + \underline{o}(1))
\]
whenever $|E| \gtrapprox q^{\frac{d(2 \ell - 1)}{2 \ell} + \frac{1}{2 \ell}}$.  In particular, 
\[ 
|\Pi_{\alpha, \beta}(E)| \lesssim |E|^2
\] for sets $E \subset \mathbb{Z}_q^2$ of sufficiently large cardinality.
\end{theorem}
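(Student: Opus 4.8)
The plan is to carry over the Fourier-analytic proof of Theorem \ref{large} essentially verbatim, substituting the $\mathbb Z_{p^\ell}$ character-sum estimates of \cite{CIP} for the $\mathbb F_q$ estimates of \cite{HIKR}. Fix a nontrivial additive character $\chi$ on $\mathbb Z_q$ and set $\widehat E(m)=\sum_{x\in E}\chi(-m\cdot x)$. Expanding the two indicator conditions defining $\Pi_{\alpha,\beta}(E)$ via the orthogonality relation $\mathbf 1[x=0]=q^{-1}\sum_{s\in\mathbb Z_q}\chi(sx)$ gives
\[
|\Pi_{\alpha,\beta}(E)|=\frac{1}{q^2}\sum_{s,t\in\mathbb Z_q}\chi(-s\alpha-t\beta)\sum_{u\in E}\widehat E(-su)\,\widehat E(-tu).
\]
The term $(s,t)=(0,0)$ contributes the main term $|E|^3/q^2$, since $\widehat E(0)=|E|$, so everything reduces to showing that the remaining terms sum to $o(|E|^3/q^2)$ in the stated range.

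I would first dispatch the two degenerate ranges $\{s=0,\,t\neq 0\}$ and $\{s\neq 0,\,t=0\}$. When $s=0$ the factor $\widehat E(-su)$ collapses to $|E|$, and summing the remaining character sum in $t$ recognizes the single dot-product count $N_\beta=|\{(u,w)\in E\times E:u\cdot w=\beta\}|$, giving a contribution of $\tfrac{|E|}{q^2}(qN_\beta-|E|^2)$. The ring extension \cite{CIP} of the single dot-product theorem of \cite{HIKR} provides $N_\beta=(|E|^2/q)(1+o(1))$ throughout the stated range of $|E|$, so this contribution is $\tfrac{|E|^3}{q^2}o(1)$; the range $\{s\neq 0,\,t=0\}$ is identical with $\alpha$ in place of $\beta$.

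The generic range $\sum_{s,t\neq 0}$ is the main obstacle. Writing $\nu_\gamma(u)=|E|/q+e_\gamma(u)$, the generic range equals $\sum_{u\in E}e_\alpha(u)e_\beta(u)$, which Cauchy-Schwarz bounds by $(\sum_{u\in E}e_\alpha(u)^2)^{1/2}(\sum_{u\in E}e_\beta(u)^2)^{1/2}$. Each second moment unfolds into the double Gauss sum
\[
\sum_{u\in E}e_\gamma(u)^2=\frac{1}{q^2}\sum_{s,s'\neq 0}\chi\big((s'-s)\gamma\big)\sum_{u\in E}\widehat E(-su)\,\overline{\widehat E(-s'u)},
\]
and the crux is to bound this without discarding the oscillation in $\chi((s'-s)\gamma)$: passing to absolute values is too lossy to reach the claimed threshold. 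The genuinely new feature, absent over a field, is that $s,s'$ now range over all nonzero residues of $\mathbb Z_{p^\ell}$, including the zero divisors $p\mathbb Z_q,\dots,p^{\ell-1}\mathbb Z_q$, and both the magnitude of the relevant Gauss sums and the size of the orbit $\{su:s\in\mathbb Z_q\}$ of a point $u\in E$ depend on the $p$-adic valuations of the coordinates. The estimate must therefore be stratified by divisibility level, exactly as in \cite{CIP}, and it is this stratification that produces the exponent $\tfrac{d(2\ell-1)}{2\ell}+\tfrac{1}{2\ell}=d-\tfrac{d-1}{2\ell}$, which collapses to the finite-field exponent $\tfrac{d+1}{2}$ when $\ell=1$; I expect the bookkeeping over the valuation strata to be the most delicate part of the argument.

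Finally, the stated corollary for $d=2$ is immediate from the asymptotic: once $|\Pi_{\alpha,\beta}(E)|=(|E|^3/q^2)(1+o(1))$ holds, the trivial bound $|E|\le q^2$ yields $|E|^3/q^2\le|E|^2$, so $|\Pi_{\alpha,\beta}(E)|\lesssim|E|^2$ for every sufficiently large $E\subset\mathbb Z_q^2$.
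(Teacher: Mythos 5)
Your proposal is correct and follows essentially the same route as the paper: the same orthogonality expansion with main term $|E|^3/q^2$, the degenerate $s=0$ or $t=0$ terms controlled by the single dot-product estimate over $\mathbb{Z}_{p^\ell}$, and Cauchy--Schwarz on the doubly nonzero range reducing to the second-moment character sum. The ``delicate stratification by valuation'' you defer is exactly the content of Lemma \ref{LemmaZqd}, which the paper likewise imports as a black box from \cite{CIP} rather than reproving.
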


\begin{remark}
Notice that the proofs of Theorems \ref{large} and \ref{LargeZqd} provide both a lower and upper bound on the cardinality of $\Pi_{\alpha, \beta}(E)$, though we could achieve the upper bound $|\Pi_{\alpha, \beta}(E)| \lesssim q^{-2} |E|^3$ if we relaxed the condition $|E| \gtrapprox q^{\frac{d+1}{2}}$ to simply $|E| \gtrsim q^{\frac{d+1}{2}}$, for example.
\end{remark}

\section{Explicit constructions}
\subsection{Sharpness of Theorem \ref{general}}
We construct explicit sharpness examples for $\mathbb F_q^2$. The same constructions can be modified to yield sharpness in $\mathbb Z_q^2$ as well.
\begin{proposition}\label{sharp}
Given a natural number $n\lesssim q$, and elements $\alpha,\beta\in \mathbb F_q^*$, there is a set, $E \subset \mathbb F_q^2$ for which $|E|=n$ and
$$|\Pi_{\alpha,\beta}(E)|\approx n^2.$$
\end{proposition}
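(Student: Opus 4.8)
The plan is to concentrate many solutions around a single \emph{pivot} vector and then to invoke Theorem \ref{general} for the matching upper bound. Indeed, Theorem \ref{general} already supplies $|\Pi_{\alpha,\beta}(E)| \lesssim n^2$ for every $n$-point set, so the whole content of the proposition is to produce a set realizing the lower bound $|\Pi_{\alpha,\beta}(E)| \gtrsim n^2$.

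First I would fix a pivot $u_0 = (1,0) \in \mathbb{F}_q^2$, so that $u_0 \cdot v$ simply reads off the first coordinate of $v$. The equation $u_0 \cdot v = \alpha$ then cuts out the affine line $L_\alpha = \{(\alpha, t) : t \in \mathbb{F}_q\}$, and $u_0 \cdot w = \beta$ cuts out the parallel line $L_\beta = \{(\beta, s) : s \in \mathbb{F}_q\}$. Each line contains exactly $q$ points, which is precisely where the hypothesis $n \lesssim q$ is used. I would then choose $V \subset L_\alpha$ and $W \subset L_\beta$ each of size $\approx n/2$ and set $E = \{u_0\} \cup V \cup W$, adjusting the sizes so that $|E| = n$.

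By construction every triple $(u_0, v, w)$ with $v \in V$ and $w \in W$ satisfies $u_0 \cdot v = \alpha$ and $u_0 \cdot w = \beta$, hence lies in $\Pi_{\alpha,\beta}(E)$. This gives $|\Pi_{\alpha,\beta}(E)| \geq |V|\,|W| \gtrsim n^2$, and together with Theorem \ref{general} yields $|\Pi_{\alpha,\beta}(E)| \approx n^2$.

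The remaining points are bookkeeping rather than real obstacles. One must make $|E|$ exactly $n$ after including the single pivot, and one must handle the degenerate case $\alpha = \beta$, in which $L_\alpha$ and $L_\beta$ coincide; there I would instead take a single $V \subset L_\alpha$ of size $\approx n$ and count the $\approx n^2$ triples $(u_0, v, w)$ with $v, w \in V$, leaving the asymptotic unchanged. Finally, any incidental overlap of $u_0$ with the two lines alters $|E|$ and the triple count only by $O(1)$, so it does not affect the conclusion; and the same construction carries over to $\mathbb{Z}_q^2$ once $u_0$ is taken to be a unit coordinate vector, so that the level sets of $v \mapsto u_0 \cdot v$ remain lines of cardinality $q$.
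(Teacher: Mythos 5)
Your construction is essentially the same as the paper's: a single pivot point together with roughly $n/2$ points on each of the two lines $\{v : u_0\cdot v=\alpha\}$ and $\{w : u_0\cdot w=\beta\}$, giving $\gtrsim n^2$ triples, with the upper bound from Theorem \ref{general}; the paper merely uses the pivot $(1,1)$ and the lines $y=\alpha-x$, $y=\beta-x$ instead of $(1,0)$ and vertical lines. The argument is correct, and your explicit handling of the $\alpha=\beta$ case matches the paper's footnote remark.
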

\begin{proof}
Let $u$ be the point with coordinates $(1,1)$. Now, distribute up to $\left\lfloor \frac{n-1}{2} \right\rfloor$ points along the line $y=\alpha -x$, and distribute the remaining up to $\left\lceil \frac{n-1}{2} \right\rceil$ points along the line $y=\beta -x$. If there are any points left over, put them anywhere not yet occupied.\footnote{This is just in the case that $(1,1)$ is on one of the lines or $\alpha =\beta$.} Clearly, there are at least $|E|^2$ pairs of points $(b,c)$, where $q$ is chosen from the first line, and $r$ is chosen from the second. Notice that $u$ contributes a triple to $\Pi_{\alpha,\beta}(E)$ for each such pair, giving us
$$|\Pi_{\alpha,\beta}(E)|\approx n^2.$$
\end{proof}

\subsection{The special case $\alpha=\beta=0, d = 2$}
\begin{proposition}\label{zeroDPs}
There exists a set $E \subset \mathbb{F}_q^2$ of cardinality $|E| = n<2q$ for which
$$|\Pi_{0,0}(P)|\approx n^3.$$
\end{proposition}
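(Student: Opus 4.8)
The plan is to build $E$ out of two lines through the origin that are orthogonal to one another, exploiting the fact that in such a configuration a single vector can be simultaneously orthogonal to an entire line's worth of other vectors. Concretely, I would take $E$ to be the union of the two coordinate axes,
\[
E = \{(x,0) : x \in \mathbb{F}_q\} \cup \{(0,y) : y \in \mathbb{F}_q\}.
\]
These two lines meet only at the origin, so $|E| = 2q - 1 =: n < 2q$, exactly as the hypothesis permits. The point of this choice is that the direction vectors $(1,0)$ and $(0,1)$ satisfy $(1,0)\cdot(0,1) = 0$, while neither axis is self-orthogonal (each direction has nonzero norm). Any pair of mutually orthogonal, non-isotropic lines through the origin would serve equally well, and this version makes the construction available for every $q$, with no need for $-1$ to be a square.

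Next I would record the key orthogonality fact. If $u = (x,0)$ with $x \neq 0$, then $u \cdot v = 0$ for $v = (v_1,v_2)$ forces $v_1 = 0$, so that $v$ lies on the $y$-axis, and every point of the $y$-axis lies in $E$. Symmetrically, a nonzero $u$ on the $y$-axis is orthogonal within $E$ precisely to the $x$-axis. Thus each nonzero $u$ on one axis is orthogonal to exactly $q$ points of $E$, namely the entire perpendicular axis (origin included).

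With this in hand the count is a routine split according to the location of $u$. When $u$ is a nonzero point of the $x$-axis, any $v,w$ on the $y$-axis give $u\cdot v = u\cdot w = 0$, contributing $(q-1)\cdot q \cdot q$ triples; the nonzero points of the $y$-axis contribute another $(q-1)q^2$ by symmetry; and $u = (0,0)$ contributes $(2q-1)^2$ triples since $0\cdot v = 0\cdot w = 0$ automatically. Summing the three disjoint cases gives
\[
|\Pi_{0,0}(E)| = 2(q-1)q^2 + (2q-1)^2 \approx 2q^3,
\]
and since $n = 2q-1$ satisfies $n^3 \approx 8q^3$, we conclude $|\Pi_{0,0}(E)| \approx n^3$, as claimed.

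Honestly, there is little genuine obstacle here: the entire content lies in recognizing that \emph{orthogonal} lines, rather than a single line, are what is needed, since a single non-isotropic line forces $v = w = 0$ whenever $u \neq 0$ and yields only $\approx n^2$ triples. The only points demanding care are bookkeeping ones, namely checking that the shared origin is not double-counted in $|E|$, and confirming that for nonzero $u$ the orthogonal set within $E$ is the full perpendicular axis rather than a proper subset. Both are immediate for the coordinate-axis construction.
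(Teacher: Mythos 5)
Your construction is essentially the paper's: both build $E$ from points on the two coordinate axes and count triples $(u,v,w)$ with $u$ on one axis and $v,w$ ranging over the perpendicular one, and your case analysis and arithmetic are correct. The only difference is that you take the full axes (forcing $n=2q-1$), whereas the paper selects $\lceil n/2\rceil$ points on each axis so that the bound holds for an arbitrary cardinality $n<2q$; if the proposition is read as quantifying over all such $n$, you should likewise restrict to $\lceil n/2\rceil$ points per axis, which changes nothing essential in your count.
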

\begin{proof}
Select $\left\lceil\frac{n}{2}\right\rceil$ points with zero $x$-coordinate, and $\left\lceil\frac{n}{2}\right\rceil$ points with zero $y$-coordinate. Now, for each of the points with zero $x$-coordinate, there are about $\left(\frac{n}{2}\right)\left(\frac{n}{2}\right)$ pairs of points with zero $y$-coordinate. Notice that any point chosen with zero $x$-coordinate will have dot product zero with each point from the pair chosen with zero $y$-coordinate. Therefore, each of these $\frac{1}{8}n^3$ triples will contribute to $\Pi_{0,0}(E)$.

We can get just as many triples that contribute to $\Pi_{0,0}(E)$ by taking single points with zero $y$-coordinate, and pairs of points with zero $x$-coordinate. In total, we get
$$|\Pi_{0,0}(P)|\approx\frac{1}{8}n^3+\frac{1}{8}n^3 \approx n^3.$$
\end{proof}

\section{Proofs of main results}\label{proofs}

\subsection{Proof of Theorem \ref{general}}\label{genProof}
This proof is a modified version of the proof of Theorem 1 in \cite{BS}, to which we refer the reader for a more detailed exposition.
\begin{proof}
We will simultaneously prove this for $E\subset \mathbb F_q^2$ and $E \subset \mathbb Z_q^2.$ Here, we will use $R_q$ to denote either $\mathbb F_q$ or $\mathbb Z_q$, and we will be more specific when necessary.

Our basic idea is to consider pairs of points $(v,w) \in E \times E$ and obtain a bound on the number of possible candidates for $u$ to contribute a triple of the form $(u,v,w)$ to $\Pi_{\alpha,\beta}(E)$. Consider $a  = (a_x, a_y) \in R_q^2$, and notice that for a point $v\in E$, the set of points $L_\alpha(v)$ that determine the dot product $\alpha$ with $v$  forms a line.
\begin{equation}\label{lineEq}
L_\alpha(v)=\left\{(x,y)\in R_q^2: xv_x+yv_y=\alpha\right\}.
\end{equation}
Also, $v$ lies on a unique line containing the origin. We similarly define $L_\beta(v).$  Now, consider a second point $w\in E$. It is easy to see that if $|L_\alpha(v)\cap L_\beta(w)|> 1$, then $v$ and $w$ lie on the same line through the origin which implies that if $v$ and $w$ are on different lines through the origin, then $|L_\alpha(v)\cap L_\beta(w)|\leq 1$. We will use this dichotomy to decompose $E\times E$ into two sets:
\begin{align*}
A&=\{(v,w)\in E \times E : |L_\alpha(v)\cap L_\beta(w)|\leq 1, |L_\alpha(w)\cap L_\beta(v)|\leq 1\}
\\
B&=(E \times E)\setminus A.
\end{align*}

Given $(v,w) \in A$, the pair can only be the last coordinates of at least triple in $\Pi(E)$. This is of course only if $L_\alpha(v)\cap L_\beta(w)$ is a point in $E$. As there are no more than $|E|^2$ choices for pairs $(v,w)\in A,$ the contribution to $\Pi(E)$ by point pairs in $A$ is at most $|E|^2$.

The analysis on the set of pairs in $B$ is a bit more delicate. Consider an arbitrary pair, $(v,w)\in B$. Without loss of generality (possibly exchanging $v$ with $w$ or $\alpha$ with $\beta$) suppose $|L_\alpha(v)\cap L_\beta(w)|>1.$ Then the following holds:
\begin{align*} |L_\alpha(v)\cap L_\beta(w)| &>1
\\
\left|\left\{(x,y)\in R_q^2: xv_x+yv_y=\alpha\right\}\cap \left\{(x,y)\in R_q^2: xw_x+yw_y=\beta\right\}\right|&>1
\\
\left|\left\{(x,y)\in R_q^2: xv_x+yv_y=\alpha \text{ and } xw_x+yw_y=\beta\right\}\right|&>1.
\end{align*}
Namely, there will be more than one point with coordinates $(x,y)\in R_q^2$ satisfying
\begin{equation}\label{lambda}
xv_x+yv_y=\alpha\left(\frac{xw_x+yw_y}{\beta}\right)=\frac{\alpha}{\beta}(xw_x+yw_y).
\end{equation}
Note that $\beta$ is a unit, and hence the quantity $\alpha/\beta$ is well defined. This restriction tells us that if $|L_\alpha(v)\cap L_\beta(w)|>1$, then $|L_\alpha(v)\cap L_\beta(w')|=0$, for any $w'\neq w$. This should not be surprising for if $\alpha =\beta$, then $L_\alpha(v)=L_\beta(w)$ forces $v=w$.

We pause for a moment to introduce an equivalence relation, say $\sim$, on the set of lines. Two lines $L_\alpha(v)$ and $L_\beta(w)$ are equivalent under $\sim$ if one can be translated to become a (possibly improper) subset of the other. It is clear that if $|L_\alpha(v)\cap L_\beta(w)|>1$ then $L_\alpha(v) \sim L_\beta(w)$. The equivalence classes of $\sim$ keep track of the different ``directions" that lines can have. So we can easily see that $L_\alpha(v)\sim L_\beta(v)$. Take note that if $R_q=\mathbb Z_q$ it is possible for two distinct lines to intersect in more than one point. 

If $|L_\alpha(v)\cap L_\beta(w)|>1$, then the pair $(v,w)$ has no more than $\min\{|L_\alpha(v)|,|L_\beta(w)|\}$ possible choices for $u$ to contribute a triple of the form $(u,v,w)$ to $\Pi_{\alpha,\beta}(E)$. Now, we see that any other pair of points, say $(v',w')$, with $|L_\alpha(v')\cap L_\beta(w')|>1$ and with $L_\alpha(v) \sim L_\alpha(v')$, will have $L_\alpha(v)\cap L_\alpha(v')=\emptyset,$ and $L_\beta(w) \sim L_\beta(w')$, will have $L_\beta(w)\cap L_\beta(w')=\emptyset.$ So any point $u$ that contributes to a triple of the form $(u,v,w)\in\Pi_{\alpha,\beta}(E)$, can only contribute to a triple with a single pair $(v,w)$ when $L_\alpha(v) \sim L_\beta(w)$.

Therefore, given any single equivalence class of $\sim$, there can be no more than $|E|$ choices for $u$ to contribute a triple of the form $(u,v,w)$ to $\Pi_{\alpha,\beta}(E)$ with $(v,w)\in B.$ As there are no more than $|E|$ possible choices for equivalence classes of $L_\alpha(v)$ (as each point has only one associated equivalence class of $\sim$), there are no more than $|E|^2$ triples of the form $(u,v,w) \in \Pi_{\alpha,\beta}(E)$ with $(v,w)\in B.$

\end{proof}

\subsection{Proof of Theorem \ref{large}}
\begin{proof} Let $\chi$ denote the canonical additive character of $\mathbb{F}_q$. By orthogonality, we have
\begin{align*}
|\Pi_{\alpha, \beta}(E)| &=  |\{(x,y,z)\in E\times E\times E: x\cdot y = \alpha, x \cdot z = \beta\}\\
&=q^{-2}\sum_{s,t \in \mathbb F_q}\sum_{x,y,z \in E} \chi(s(x\cdot y-\alpha))\chi(t(\beta-x \cdot z))\\
&=q^{-2}\sum_{s,t \in \mathbb{F}_q} \sum_{x, y, z \in E} \chi(s\alpha) \chi(-t\beta) \chi(x \cdot (sy - t z))
\\
&:= I + II + III,
\end{align*}
Where $I$ is the term with $s=t=0$, $II$ is the term with exactly one of $s$ or $t$ equal to zero, and $III$ is the term with $s$ and $t$ both nonzero. Clearly 
\[
I = q^{-2} \sum_{s = t = 0} \sum_{x,y , z\in E} \chi(s\alpha) \chi(-t\beta) \chi(x \cdot (sy - t z)) = |E|^3q^{-2}.
\]  For the second and third sums, we need the following known results.
\begin{lemma}[\cite{HIKR}] For any set $E \subset \mathbb{F}_q^d$, we have the bound
\begin{equation}\label{ell1}
\sum_{s \neq 0} \sum_{x,y \in E} \chi(s(x \cdot y - \gamma)) \leq  |E|q^{\frac{d+1}{2}}\lambda(\gamma),
\end{equation}
where $\lambda(\gamma) = 1$ for $\gamma \in \mathbb{F}_q^*$ and $\lambda(0) = \sqrt{q}$.  Furthermore, we have
\begin{equation}\label{ell2}
\sum_{s, s' \neq 0} \sum_{\substack{y, y' \in E \\ sy = s'y'}}  \chi(\alpha(s' - s)) \leq |E|q\lambda(\gamma).
\end{equation}
\end{lemma}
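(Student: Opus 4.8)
The plan is to view both displayed inequalities as character-sum estimates, to prove \eqref{ell2} directly by a collinearity count, and then to derive \eqref{ell1} from \eqref{ell2} by a single application of the Cauchy--Schwarz inequality. Throughout I read \eqref{ell2} with one parameter, writing its left-hand side as $T_\gamma = \sum_{s,s'\neq0}\sum_{\substack{y,y'\in E\\ sy=s'y'}}\chi(\gamma(s'-s))$, which is exactly the quantity that arises in the reduction below.

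To establish \eqref{ell2} I would analyze the constraint $sy=s'y'$ with $s,s'\in\mathbb{F}_q^*$. Because $\mathbb{F}_q$ has no zero divisors, this forces $y$ and $y'$ to lie on a common line through the origin, so I would split $T_\gamma$ into the cases $y=y'=0$; exactly one of $y,y'$ equal to $0$ (which is incompatible with $s,s'\neq0$ and contributes nothing); $y=y'\neq0$; and $y'=cy$ with $c\neq0,1$. In the diagonal case $y=y'\neq0$ the constraint gives $s=s'$, so each such point contributes $q-1$ and the total diagonal contribution is at most $(q-1)|E|$. Each genuinely off-diagonal collinear pair reduces to the inner sum $\sum_{s'\neq0}\chi(\gamma s'(1-c))$, which collapses to $-1$ whenever $\gamma\neq0$; these terms therefore only decrease $T_\gamma$, and collecting the pieces yields $T_\gamma\le(q-1)|E|+1\le|E|q\lambda(\gamma)$ for units $\gamma$.

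With \eqref{ell2} in hand I would prove \eqref{ell1} by grouping its left-hand side according to $x$, writing it as $\sum_{x\in E}\sum_{s\neq0}\chi(-s\gamma)\sum_{y\in E}\chi(s\,x\cdot y)$. Applying Cauchy--Schwarz in the variable $x$ and then enlarging the outer sum from $E$ to all of $\mathbb{F}_q^d$ (which only increases a sum of squares), the orthogonality relation $\sum_{x\in\mathbb{F}_q^d}\chi(x\cdot(sy-s'y'))=q^d\mathbf{1}[sy=s'y']$ collapses the $x$-sum onto precisely the collinearity condition defining $T_\gamma$. This gives $|\,\mathrm{LHS}\,|^2\le|E|q^d\,T_\gamma$, and substituting \eqref{ell2} and taking square roots produces $|E|q^{(d+1)/2}\sqrt{\lambda(\gamma)}\le|E|q^{(d+1)/2}\lambda(\gamma)$, as claimed.

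The main obstacle is the degenerate regime $\gamma=0$. There the factor $\chi(\gamma s'(1-c))$ is identically $1$, so the cancellation that controls the off-diagonal collinear terms disappears and $T_0$ genuinely counts collinear quadruples, which can be as large as $q^2|E|$ when $E$ concentrates on a single line through the origin. Consequently I would not route the $\gamma=0$ case of \eqref{ell1} through \eqref{ell2}; instead I would bound it directly via the orthogonal-complement count $\sum_{x\in E}|E\cap x^{\perp}|$, estimating the fluctuation of these hyperplane incidences by a separate Gauss-sum computation. This degeneracy is exactly what forces the weaker threshold $|E|\gtrapprox q^{(d+2)/2}$ when $\alpha$ or $\beta$ vanishes in Theorem \ref{large}, as against $q^{(d+1)/2}$ for units.
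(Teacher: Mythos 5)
The paper never proves this lemma itself; it is quoted from \cite{HIKR}, so the honest comparison is with the standard argument there and with the computation the paper repeats for the term $III$ in the proof of Theorem \ref{large}. Your route --- establish \eqref{ell2} by classifying solutions of $sy=s'y'$ and exploiting the cancellation $\sum_{s'\neq 0}\chi(\gamma s'(1-c))=-1$, then deduce \eqref{ell1} by Cauchy--Schwarz in $x$, completing the sum to all of $\mathbb{F}_q^d$ and using orthogonality to collapse onto the collinearity condition --- is exactly that standard argument, and for $\gamma\in\mathbb{F}_q^*$ your computations check out: the pair $y=y'=0$ contributes $1$, the diagonal pairs $y=y'\neq 0$ force $s=s'$ and contribute $(q-1)|E|$, each genuinely off-diagonal collinear pair contributes $-1$, so $T_\gamma\le q|E|$, and then $|S|^2\le |E|\,q^d\,T_\gamma\le |E|^2q^{d+1}$ gives \eqref{ell1} with $\lambda(\gamma)=1$. (You also correctly read the stray $\alpha$ in \eqref{ell2} as $\gamma$.)

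There are two caveats, one a gap on your side and one a genuine catch about the statement. The gap: you only sketch the $\gamma=0$ case of \eqref{ell1} (the hyperplane count $\sum_{x\in E}|E\cap x^{\perp}|$ with a Gauss-sum error term) without carrying it out, and since Theorem \ref{large} explicitly claims a conclusion for non-unit $\alpha,\beta$ at the threshold $q^{(d+2)/2}$, that case is actually used downstream and would need to be written in full. The catch: your observation that $T_0$ can be as large as $q^2|E|$ (take $E$ inside a single line through the origin, so every collinear pair admits $q-1$ choices of $(s,s')$ with no cancellation) is correct, and it shows that \eqref{ell2} as printed, with a single factor $\lambda(0)=\sqrt q$, cannot hold when $\gamma=0$. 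The paper's own application bounds $III_\alpha^2$ by $q^{d+1}|E|\lambda(\alpha)^2$, so the intended right-hand side of \eqref{ell2} is surely $|E|q\lambda(\gamma)^2$, which your worst case is consistent with. So your diagnosis of the degenerate regime is not an error in your argument but an inconsistency in the quoted statement; for units, which is all the lemma's headline applications in this paper require, your proof is complete and correct.
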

Note that the quantities in the above Lemma can be shown to be real numbers, so there is no need for absolute values. Now, separating the $II$ term into two sums, each with exactly one of $s$ or $t$ zero,
\begin{align*}
II &= q^{-2}|E|\left( \sum_{s \neq 0} \sum_{x, y \in E} \chi(s (x \cdot y - \alpha)) + \sum_{t \neq 0} \sum_{x, z \in E} \chi(t (x \cdot z - \beta))\right)
\end{align*}
From \eqref{ell1}, it follows that $|II| \leq |E|^2q^{\frac{d-3}{2}}(\lambda(\alpha) +\lambda(\beta))$.  Finally, by the triangle-inequality, dominating a nonnegative sum over $x\in E$ by the same nonnegative sum over $x\in \mathbb F_q^d$, and applying Cauchy-Schwarz we have
\begin{align*}
|III| &\leq q^{-2} \sum_{x \in E} \left|\sum_{s \neq 0} \sum_{y \in E}\chi(s(x \cdot y - \alpha) )\right|\left| \sum_{t \neq 0}\sum_{z \in E}  \chi(t(x \cdot z - \beta))\right|
\\
&\leq q^{-2} \sum_{x \in \mathbb F_q^d} \left|\sum_{s \neq 0} \sum_{y \in E}\chi(s(x \cdot y - \alpha) )\right|\left| \sum_{t \neq 0}\sum_{z \in E}  \chi(t(x \cdot z - \beta))\right|
\\
&\leq q^{-2} \left( \sum_{x \in \mathbb{F}_q^d} \left| \sum_{s \neq 0} \sum_{y \in E} \chi(s(x \cdot y - \alpha)) \right|^2 \right)^{1/2}\left( \sum_{x \in \mathbb{F}_q^d} \left| \sum_{t \neq 0} \sum_{z \in E}\chi(t(x \cdot z - \beta)) \right|^2 \right)^{1/2}
\\
&:= q^{-2} III_{\alpha} \cdot III_{\beta}
\end{align*}
Now,
\begin{align*}
III_{\alpha}^2 &=\sum_{x \in \mathbb{F}_q^d} \left| \sum_{s \neq 0} \sum_{y \in E} \chi(s (x \cdot y - \alpha)) \right|^2 
\\
&= \sum_{x} \sum_{s, s' \neq 0} \sum_{y, y' \in E} \chi(s (x \cdot y - \alpha)) \chi(-s' (x \cdot y' - \alpha)) 
\\
&= \sum_{x} \sum_{s, s' \neq 0} \sum_{y, y' \in E} \chi(\alpha(s' - s)) \chi(x \cdot (sy - s' y'))
\\
&= q^{d} \sum_{s, s' \neq 0} \sum_{\substack{y, y' \in E \\ sy = s'y'}} \chi(\alpha(s' - s))
\\
&\leq q^{d+1}|E| \lambda(\alpha)^2
\end{align*}
by \eqref{ell2}.  Similarly, we have $III_{\beta} \leq \sqrt{ q^{d+1}|E|} \lambda(\beta)$.  Combining these estimates yields
\[
|III| \leq q^{d-1}|E| \lambda(\alpha)\lambda(\beta).
\]
This completes the proof as we have
\[
|\Pi_{\alpha, \beta}(E)| = \frac{|E|^3}{q^2} + R_{\alpha, \beta},
\]
where
\[
|R_{\alpha,\beta}|  \leq |E|^2q^{\frac{d-3}{2}}(\lambda(\alpha) +\lambda(\beta)) + q^{d-1}|E| \lambda(\alpha)\lambda(\beta).
\]

\end{proof}

\subsection{Proof of Theorem \ref{LargeZqd}}
The proof will imitate that of Theorem \ref{large}, so we omit some of the details.  Let $\chi(\sigma) = \exp(2 \pi i \sigma / q)$ be the canonical additive character of $\mathbb{Z}_q$, and identify $E$ with its characteristic function.  We use the following known bounds for dot-product sets in $\mathbb{Z}_q^d$.
\begin{lemma}[\cite{CIP}]\label{LemmaZqd}
Suppose that $E \subset \mathbb{Z}_q^d$,  where $q = p^{\ell}$ is the power of an odd prime.  Suppose that $\gamma \in \mathbb{F}_q^{\times}$ is a unit.  Then we have the following upper bounds.
\begin{equation}\label{L1Zqd}
\sum_{j \in \mathbb{Z}_q \setminus\{0\}} \sum_{x, y \in E} \chi(j (x \cdot y - \gamma )) \leq 2 |E| q^{\left(\frac{d-1}{2}\right) \left(2 -  \frac{1}{\ell}\right)+1}
\end{equation}
and
\begin{equation} \label{L2Zqd}
\sum_{s,s' \in \mathbb{Z}_q \setminus\{0\}} \sum_{\substack{y, y' \in E \\ sy = s'y'}} \chi(\gamma(s' - s)) \leq 2|E| q^{\frac{\ell d  - d + 1}{\ell}}
\end{equation}
\end{lemma}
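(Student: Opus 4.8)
The plan is to prove \eqref{L2Zqd} first and to deduce \eqref{L1Zqd} from it by the same Cauchy--Schwarz argument used to estimate the term $III_\alpha$ in the proof of Theorem \ref{large}. Writing $g(x)=\sum_{s\neq 0}\sum_{y\in E}\chi\big(s(x\cdot y-\gamma)\big)$, the left-hand side of \eqref{L1Zqd} is $\sum_{x\in E}g(x)$; by Cauchy--Schwarz and then extending the sum over $x$ to all of $\mathbb{Z}_q^d$ one gets $\big|\sum_{x\in E}g(x)\big|\le |E|^{1/2}\big(\sum_{x\in\mathbb{Z}_q^d}|g(x)|^2\big)^{1/2}$. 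Expanding the square and using that $\sum_{x\in\mathbb{Z}_q^d}\chi\big(x\cdot(sy-s'y')\big)$ equals $q^d$ when $sy=s'y'$ and $0$ otherwise shows $\sum_{x\in\mathbb{Z}_q^d}|g(x)|^2=q^d$ times the left-hand side of \eqref{L2Zqd}. Feeding in \eqref{L2Zqd} yields the exponent $q^{\,d-\frac{d-1}{2\ell}}=q^{(\frac{d-1}{2})(2-\frac1\ell)+1}$, with the constant absorbed into the factor $2$. So everything reduces to \eqref{L2Zqd}.

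For \eqref{L2Zqd}, write its left-hand side as $\sum_{s,s'\neq 0}\chi(\gamma(s'-s))\,N(s,s')$, where $N(s,s')=|\{(y,y')\in E\times E: sy=s'y'\}|$, and split according to whether $s=s'$. On the diagonal the character is trivial: with $a=v_p(s)$ denoting the $p$-adic valuation, $sy=sy'$ holds iff $y\equiv y'\pmod{p^{\ell-a}}$, so $N(s,s)=D_a:=|\{(y,y')\in E\times E: y\equiv y'\ (\mathrm{mod}\ p^{\ell-a})\}|$ depends only on $a$. Since each class modulo $p^{\ell-a}$ meets $\mathbb{Z}_q^d$ in $p^{ad}$ points we have $D_a\le |E|p^{ad}$, and since there are $\phi(p^{\ell-a})=p^{\ell-a-1}(p-1)$ elements of valuation $a$ (here $\phi$ is Euler's totient),
\[
\sum_{s\neq 0}N(s,s)=\sum_{a=0}^{\ell-1}\phi(p^{\ell-a})\,D_a\le (p-1)|E|p^{\ell-1}\sum_{a=0}^{\ell-1}p^{a(d-1)}\le |E|\,q^{\,d-\frac{d-1}{\ell}},
\]
the geometric sum being dominated by its top term $a=\ell-1$. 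This diagonal contribution already gives one copy of the asserted bound.

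The off-diagonal part $s\neq s'$ is where the cancellation of $\chi(\gamma(s'-s))$ must be used, and this is the crux; here it is essential that $\gamma$ is a unit. Stratify by $(a,b)=(v_p(s),v_p(s'))$ and write $s=p^a u$, $s'=p^b u'$ with $u,u'$ units; assuming $a\le b$, the relation $sy=s'y'$ is equivalent to $y\equiv \lambda y'\pmod{p^{\ell-a}}$ with $\lambda=u^{-1}u'p^{b-a}$, so $N(s,s')$ depends on $(u,u')$ only through $\lambda$. When $a=b$, summing over the unit $u$ at fixed $\lambda=u^{-1}u'$ produces the Ramanujan sum $\sum_{u}\chi\big(\gamma p^{a}(\lambda-1)u\big)$, which vanishes unless $\lambda\equiv 1\pmod{p^{\ell-a-1}}$; this forces $\lambda$ near $1$ and collapses the off-diagonal back onto the congruence counts $D_a$, contributing no more than the diagonal did. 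When $a\neq b$ the corresponding sum over $u$ is a Kloosterman-type sum over $\mathbb{Z}_{p^{\ell-a}}$ (the character sees $u$ while $N$ sees $u^{-1}$), whose square-root cancellation, valid because $p\geq 3$, makes these mixed levels lower order. Summing, the off-diagonal is again $\lesssim |E|q^{\,d-\frac{d-1}{\ell}}$, and adding the two parts gives \eqref{L2Zqd} with constant $2$.

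The main obstacle is precisely the presence of zero divisors in $\mathbb{Z}_q\setminus\{0\}$: in the field case nonzero means a unit and $sy=s'y'$ forces literal collinearity, whereas here $s,s'$ may be zero divisors and the argument must proceed level-by-level in the valuation filtration. The genuinely delicate input is the Ramanujan/Kloosterman cancellation that exploits $\gamma\in\mathbb{Z}_q^\times$ (and $p\ge 3$ for the Kloosterman estimates); this is exactly what improves the trivial exponent $d$---obtained by discarding all cancellation---to $d-\tfrac{d-1}{\ell}$. As the displayed geometric sum shows, the decisive contribution comes from the coarsest level $s,s'\sim p^{\ell-1}$, where $sy=s'y'$ constrains $y$ and $y'$ only modulo $p$, i.e. over the field $\mathbb{F}_p$.
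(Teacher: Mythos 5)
The first thing to note is that the paper contains no proof of Lemma \ref{LemmaZqd} to compare against: the lemma is quoted from \cite{CIP}, and the paper's only addition is the remark that summing CIP's geometric series over the valuation levels $i = 0, \dots, \ell-1$ replaces the factor $\ell$ by an absolute constant. Your proposal is therefore a self-contained reproof, and much of it is sound. The Cauchy--Schwarz reduction of \eqref{L1Zqd} to \eqref{L2Zqd} is correct, and the exponents match exactly since $d - \frac{d-1}{2\ell} = \left(\frac{d-1}{2}\right)\left(2-\frac{1}{\ell}\right)+1$; this mirrors how $III_{\alpha}$ is handled in the proof of Theorem \ref{large}, and it is a genuinely more economical organization than \cite{CIP}, where both bounds are proved separately via explicit Gauss-sum evaluations over $\mathbb{Z}_{p^\ell}$. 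Your diagonal count $N(s,s) \le |E|p^{ad}$ with the geometric series dominated by $a = \ell -1$ is correct (for $d \ge 2$, which is all that is used), and your treatment of the $a=b$ off-diagonal via the Ramanujan-sum criterion $\lambda \equiv 1 \pmod{p^{\ell-a-1}}$ is the right mechanism: carried out, each level $a$ contributes at most $(p-1)p^{\ell-a-1}\cdot|E|p^{ad}$, the same as the diagonal, so the constant $2$ in \eqref{L2Zqd} does survive.

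The genuine gap is your treatment of the mixed valuations $a \neq b$, which is both unproved and misdescribed. No Kloosterman sum arises there: once $\lambda = u^{-1}u'p^{b-a}$ is fixed, the inverse $u^{-1}$ sits only in the constraint defining $\lambda$, not in the character. Concretely, for $a < b$ write $\lambda = p^{b-a}w$ with $w$ a unit; the constraint $sy = s'y'$ pins $u' \equiv uw \pmod{p^{\ell - b}}$, so the character sum over $u \in (\mathbb{Z}_{p^{\ell-a}})^{\times}$ at fixed $\lambda$ is $\sum_u \chi\left(-\gamma p^a u(1 - p^{b-a}w)\right)$, a Ramanujan sum modulo $p^{\ell-a}$ whose argument has exact $p$-adic valuation $a$ (here you do use that $\gamma$ is a unit, since $1 - p^{b-a}w$ is automatically one). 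Because $a < b \le \ell - 1$ forces $\ell - a \ge 2$, this sum equals the M\"obius value $\mu(p^{\ell-a}) = 0$: the mixed levels contribute nothing at all. This is stronger and more elementary than the ``square-root cancellation'' you invoke, which as written is an assertion rather than a proof --- you give no actual estimate and no summation over the levels $(a,b)$ --- and which would not even apply, since the sums in question do not have Kloosterman shape; your appeal to $p \ge 3$ at that point is likewise a red herring (oddness of $p$ is needed for the Gauss-sum evaluations in \cite{CIP}, not here). With the $a \neq b$ block replaced by this vanishing computation, your argument closes up completely and yields both stated bounds with the constant $2$.
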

\begin{note}
The authors in \cite{CIP} actually gave a slightly different bound than those in Lemma \ref{LemmaZqd}.  For example in \eqref{L1Zqd}, they showed
\[
\sum_{j \in \mathbb{Z}_q \setminus\{0\}} \sum_{x, y \in E} \chi(j (x \cdot y - \gamma )) \leq \sum_{i = 0}^{\ell - 1} |E| q^{\left(\frac{d-1}{2}\right)\left( 1 + \frac{i}{\ell}\right)} \leq \ell |E| q^{\left(\frac{d-1}{2}\right) \left(2 -  \frac{1}{\ell}\right)+1}.
\]
However, summing the geometric series removes the factor of $\ell$ in the estimate.  Likewise, a factor of $\ell$ can be removed from the estimate in \eqref{L2Zqd}.
\end{note}

We proceed as before.  Write
\begin{align*}
|\Pi_{\alpha, \beta}| = \frac{|E|^3}{q^2} + II + III,
\end{align*}
where
\[
II := |E|q^{-2} \left( \sum_{s \neq 0} \sum_{x,y \in E} \chi(s \cdot (x \cdot y - \alpha)) +  \sum_{t \neq 0} \sum_{x,z \in E} \chi(s \cdot (x \cdot z - \beta))\right)
\]
and
\begin{align*}
III := q^{-2} \sum_{x \in E} \left(\sum_{s \neq 0} \sum_{y \in E} \chi(-s\alpha) \chi(s(x \cdot y))\right)\left(\sum_{t \neq 0} \sum_{z \in E} \chi(-t \beta) \chi(t(x \cdot z))\right).
\end{align*}
Applying Lemma \ref{LemmaZqd}, we see that
\begin{align*}
|II| &\leq 4 |E|^2 q^{-2} q^{\frac{d(2 \ell - 1) + 1}{2 \ell}},
\end{align*}
while
\begin{align*}
|III| &\leq q^{-2} \left( \sum_{x \in \mathbb{F}_q^d}  \left| \sum_{s \neq 0} \sum_{y \in E} \chi(-s\alpha) \chi(s(x \cdot y))\right|^2 \right)^{1/2}\left(\sum_{x \in \mathbb{F}_q^d} \left| \sum_{t \neq 0} \sum_{z \in E} \chi(-t \beta) \chi(t(x \cdot z))\right|^2\right)^{1/2}
\\
&\leq 2|E|q^{-2} q^{\frac{d(2 \ell - 1)}{ \ell} + \frac{1}{\ell}}.
\end{align*}
This completes the proof.

\end{document}